\newcommand{\R}{\mathbf{R}}
\newcommand{\ol}{\overline}
\newcommand{\C}{\mathcal{C}}
\renewcommand{\tilde}{\widetilde}
\renewcommand{\S}{\mathbf{S}}
\renewcommand{\phi}{\varphi}
\renewcommand{\epsilon}{\varepsilon}
\renewcommand{\leq}{\leqslant}
\renewcommand{\geq}{\geqslant}
\DeclareMathOperator{\conv}{conv}
\DeclareMathOperator{\ave}{ave}
\DeclareMathOperator{\cm}{cm}
\DeclareMathOperator{\Emb}{Emb}
\DeclareMathOperator{\Imm}{Imm}
\DeclareMathOperator{\length}{length}
\DeclareMathOperator{\mass}{m}
\theoremstyle{plain}
\newtheorem{theorem}{Theorem}[section]
\newtheorem{proposition}[theorem]{Proposition}
\newtheorem{lemma}[theorem]{Lemma}
\newtheorem{corollary}[theorem]{Corollary}
\theoremstyle{remark}
\theoremstyle{definition}
\begin{document}

\vspace*{-0.5in}
	
\title[Curves and knots with prescribed curvature]{$h$-Principles for smooth curves and knots\\with prescribed curvature}

\author{Mohammad Ghomi}
\address{School of Mathematics, Georgia Institute of Technology, Atlanta, Georgia 30332}
\email{ghomi@math.gatech.edu}
\urladdr{https://ghomi.math.gatech.edu}

\author{Matteo Raffaelli}
\address{School of Mathematics, Georgia Institute of Technology, Atlanta, Georgia 30332}
\email{raffaelli@math.gatech.edu}
\urladdr{https://matteoraffaelli.com}

\subjclass[2020]{Primary 53A04, 57K10; Secondary 58C35, 53C21}
\date{Last revised on \today}
\keywords{Convex integration, isotopy of knots, tantrix}
\thanks{The first-named author was supported by NSF grant DMS-2202337}

\begin{abstract}
We show that smooth curves with prescribed curvature satisfy a $\C^1$-dense $h$-principle in the space of immersed curves in Euclidean space. More precisely, every $\C^{\alpha \geq 2}$ curve with nonvanishing curvature in  $\R^{n\geq 3}$ can be $\C^1$-approximated by $\C^\alpha$ curves of any larger curvature, prescribed as a function of arclength. It follows that there exist $\C^\infty$ knots of prescribed curvature in every isotopy class of closed curves embedded in $\R^3$.
\end{abstract}

\maketitle

\section{Introduction}

Wasem \cite{wasem2016} showed that every $\C^{2}$ curve in Euclidean space $\R^{n\geq 3}$ may be $\C^1$-approximated by curves of  larger prescribed curvature. The proof, which is based on the work of Nash and Kuiper~\cite{nash1954, kuiper1955, conti2012}, requires the target curvature function to be $\C^\infty$, but produces only $\C^{2}$ curves. Adopting a more geometric approach, we refine Wasem’s result to obtain optimal regularity. We also achieve tighter control over the speed and position of the approximating curves.

To state our main result, let $\Gamma$ be the interval $[a,b]\subset\R$ or a topological circle $\R/((b-a)\textbf{Z})$, and $\C^\alpha(\Gamma,\R^n)$ be the space of $\C^\alpha$ maps $f\colon \Gamma\to\R^n$ with the topology induced by the $\C^\alpha$-norm 
$\lvert \,\cdot\, \rvert_\alpha$. The space of (immersed) \textit{curves} $\Imm^{\alpha\geq 1}(\Gamma,\R^n)\subset \C^\alpha(\Gamma,\R^n)$ consists of maps with nonvanishing derivative. The \textit{curvature} of $f\in\Imm^{2}(\Gamma,\R^n)$ is given by $\kappa\coloneqq \lvert T' \rvert / \lvert f'\rvert$, where $T\coloneqq f'/\lvert f'\rvert$ is the \textit{tantrix} of $f$.

\begin{theorem}\label{TH:main}
Let $f\in\Imm^{\alpha\geq 2}(\Gamma,\R^{n\geq 3})$ be a curve with curvature $\kappa>0$, and $\tilde\kappa\colon\Gamma\to\R$ be a $\C^{\alpha-2}$ function with $\tilde\kappa>\kappa$. Then, for any $\epsilon>0$, there exists a curve $\tilde f\in\Imm^\alpha(\Gamma,\R^n)$ with curvature $\tilde\kappa$ and $\lvert \tilde f-f \rvert_1\leq\epsilon$. If $f$ has unit speed, then so does $\tilde f$. Furthermore, $\tilde f$ can be made tangent to $f$ at any finite set of points prescribed along $\Gamma$.
\end{theorem}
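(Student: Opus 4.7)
The plan is to reformulate the theorem as a construction on the tantrix, and then effect the approximation via a spherical corrugation in the spirit of convex integration. Assume without loss of generality that $f$ has unit speed (reparametrizing if necessary), so that $T := f'$ is the tantrix and $\kappa = \lvert T'\rvert$. It then suffices to construct $\tilde T\in\C^{\alpha-1}(\Gamma,\S^{n-1})$ with $\lvert\tilde T'\rvert=\tilde\kappa$, uniformly close to $T$, and satisfying the integral bound $\sup_{t\in\Gamma}\lvert\int_a^t(\tilde T-T)\,ds\rvert\leq\epsilon$; the approximating curve is then $\tilde f(t):=f(a)+\int_a^t\tilde T(s)\,ds$, which automatically has unit speed, curvature $\tilde\kappa$, and is $\C^1$-close to $f$. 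For closed curves one additionally needs $\oint_\Gamma\tilde T=\oint_\Gamma T$ so that $\tilde f$ closes up.

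The heart of the proof is a spherical corrugation. Subdivide $\Gamma$ into short intervals of length $\delta>0$, and on each choose a smooth orthonormal pair $(E_1,E_2)$ in the orthogonal complement of $T$ (available since $n\geq 3$) and set
\[
\tilde T \;=\; \cos(\theta)\,T + \sin(\theta)\bigl(\cos(\phi)\,E_1+\sin(\phi)\,E_2\bigr),
\]
where $\theta$ is a small amplitude and $\phi$ is a rapidly increasing phase. Small $\theta$ gives $\tilde T\approx T$; rapid rotation of $\phi$ through complete loops in the $(E_1,E_2)$-plane forces the perturbation $\tilde T-T$ to average to $O(\delta)$ over each sub-interval and thus $\int(\tilde T-T)$ to be small globally; and the leading-order identity $\lvert\tilde T'\rvert^2\approx\kappa^2+(\sin\theta\,\phi')^2$ lets one solve $\phi'\approx\sqrt{\tilde\kappa^2-\kappa^2}/\sin\theta$, which is positive precisely because $\tilde\kappa>\kappa$. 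The sign of $\phi'$ provides a free binary parameter on each sub-interval, sufficient to balance the global constraint $\oint_\Gamma\tilde T=\oint_\Gamma T$ in the closed case.

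Optimal regularity follows by taking $\theta$ as a $\C^\alpha$ bump vanishing smoothly at the endpoints of each sub-interval, so that $\tilde T$ glues to $T$ at partition points and inherits the $\C^{\alpha-1}$ regularity of $T$, $E_i$, $\theta$, and $\phi$; the $\C^{\alpha-2}$ regularity of $\tilde\kappa$ is exactly what is needed to solve the speed equation for $\phi'\in\C^{\alpha-2}$ and hence $\phi\in\C^{\alpha-1}$. Tangency at finitely many prescribed points is arranged by placing them at partition nodes where $\theta$ already vanishes. The principal difficulty is reconciling three competing demands simultaneously---pointwise exact curvature $\tilde\kappa$, uniformly small integral perturbation, and sharp $\C^\alpha$ regularity: Wasem's Nash--Kuiper iteration loses regularity in meeting the first two, whereas the explicit tantrix corrugation above is designed to achieve all three once $\theta$ is taken sufficiently small and $\phi'$ correspondingly large. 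Controlling the residual errors---especially the $\C^0$ error in the integrated curve and the closed-curve/tangency matching across the partition---is the main technical step.
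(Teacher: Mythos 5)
Your corrugation approach is genuinely different from the paper's, which instead reparametrizes the tantrix $T$ and then inserts nested spherical loops, using a degree-theory argument (their Lemma~\ref{LM:top}) together with continuous barycentric coordinates (Lemma~\ref{LM:Kalman}) to hit the exact integral constraint $\ave(\tilde T)=\ave(T)$. Unfortunately, your version has two structural gaps.

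First, the gluing scheme is inconsistent with the exact curvature constraint. If $\theta$ vanishes \emph{smoothly} at a partition node $t_0$ (i.e.\ $\theta(t_0)=\theta'(t_0)=0$), then differentiating your ansatz gives $\tilde T'(t_0)=T'(t_0)$, hence $\lvert\tilde T'(t_0)\rvert=\kappa(t_0)<\tilde\kappa(t_0)$. To salvage the speed equation you would need $\theta'(t_0)^2=\tilde\kappa^2(t_0)-\kappa^2(t_0)\neq 0$, which contradicts smooth vanishing; and even then, matching the higher derivatives of $\theta$, $\phi$, and the frame $(E_1,E_2)$ across every node to $\C^{\alpha-1}$ order is an overdetermined patching problem you do not address. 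The paper avoids this by arranging only that $\tilde T$ and $T$ trace the \emph{same set} near $\partial I_i$ (condition $T(U)=\tilde T(\tilde U)$), so $\tilde T$ can be a reparametrization of $T$ there with speed $\tilde\kappa$; your ansatz instead forces $\tilde T(t)=T(t)$ pointwise where $\theta=0$, which is a much more rigid requirement.

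Second, the integral constraints are underpowered. For the knot to close up and for tangency at prescribed points $p_i$ you need the \emph{vector} equalities $\int_{p_i}^{p_{i+1}}(\tilde T-T)\,ds=0$, an $n$-dimensional condition on each subarc. A binary choice of the sign of $\phi'$ on each subinterval is a discrete parameter and cannot continuously sweep out an $n$-dimensional target; there is no intermediate-value or degree argument available with that parameter space. (Note also that rapid rotation of $\phi$ only makes the integral error $O(\delta)$, not zero.) This is precisely why the paper introduces the continuous $n$-parameter family $x\mapsto\tilde T_x$ over a ball $B\subset\R^n$, so that Lemma~\ref{LM:top} can guarantee some $x$ with $\ave(\tilde T_x)=x_0$ exactly. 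Without a comparable device, your construction gives only approximate position matching, which is insufficient for both tangency and the closed case.
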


The last two properties of $\tilde f$ are additional features of our method, not granted by the Nash--Kuiper approach in \cite{wasem2016}. If $\epsilon$ in Theorem~\ref{TH:main} is sufficiently small,  $h_t\coloneqq  (1-t)f+t \tilde f$, $t\in[0,1]$, is a homotopy in $\Imm^{\alpha}(\Gamma,\R^n)$. Thus, in the terminology of Gromov~\cite{gromov:PDR} or Eliashberg~\cite{eliashberg2002}, the above result establishes a $\C^1$-dense $h$-principle for smooth curves of prescribed curvature. Similar $h$-principles for curves of constant curvature or constant torsion were obtained in \cite{ghomi2007-h, ghomi-raffaelli2024b,ghomi-raffaelli2024}. Theorem~\ref{TH:main} has the following quick application. Let $\Emb^\alpha(\Gamma, \R^3)\subset \Imm^\alpha(\Gamma, \R^3)$ be the space of injective curves, which are called \textit{knots} when $\Gamma$ is a circle. 

\begin{corollary}\label{COR:main}
Let $f \in\Emb^{\alpha\geq 2}(\Gamma, \R^3)$ be a knot, and $\tilde\kappa \colon\Gamma \to \R$ be a positive $\C^{\alpha-2}$ function. Then $f$ is isotopic in $\Emb^\alpha(\Gamma, \R^3)$ to a constant-speed knot of curvature $\tilde\kappa$.
\end{corollary}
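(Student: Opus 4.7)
The strategy is to reduce Corollary~\ref{COR:main} to Theorem~\ref{TH:main}. To apply the theorem, I need a knot $g \in \Emb^\alpha(\Gamma, \R^3)$, isotopic to $f$, of constant speed and satisfying $0 < \kappa(g) < \tilde\kappa$ pointwise. I would build such a $g$ in two moves. First, $\C^1$-approximate $f$ by an embedding $f_0 \in \Emb^\alpha(\Gamma, \R^3)$ with $\kappa(f_0) > 0$ everywhere; such $f_0$ exists because in the 2-jet bundle $J^2(\Gamma, \R^3)$ the condition $\kappa = 0$ has codimension $n-1 = 2$, so Thom transversality allows one to perturb $j^2 f$ to avoid it, and the perturbation can be kept $\C^\alpha$ and $\C^1$-small. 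By the $\C^1$-openness of $\Emb^\alpha \subset \Imm^\alpha$ for compact $\Gamma$, the straight-line homotopy then furnishes an isotopy $f \simeq f_0$ in $\Emb^\alpha$. Second, set $g \coloneqq \lambda f_0$ with $\lambda > 0$ chosen so large that $\kappa(f_0)/\lambda < \tilde\kappa$ pointwise on the compact set $\Gamma$; this is possible because $\tilde\kappa$ is continuous and bounded below by a positive constant. Then $0 < \kappa(g) < \tilde\kappa$, and the dilations $s \mapsto (1 + s(\lambda - 1)) f_0$ supply an isotopy $f_0 \simeq g$ in $\Emb^\alpha$. Arranging for $f_0$ to be arclength-parametrized on a suitable copy of $\Gamma$ makes $g$ constant-speed.

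Then I would invoke Theorem~\ref{TH:main} with $g$ and target $\tilde\kappa$ to obtain $\tilde f \in \Imm^\alpha(\Gamma, \R^3)$ of curvature $\tilde\kappa$, with $\lvert \tilde f - g \rvert_1 \leq \epsilon$, and with the same constant speed as $g$ (by the unit-speed clause of the theorem, modulo a trivial rescaling of domain and image). Choosing $\epsilon > 0$ small, the straight-line homotopy $h_t \coloneqq (1-t) g + t \tilde f$ lies entirely in $\Emb^\alpha$, again by the $\C^1$-openness of $\Emb^\alpha$, and hence realizes an isotopy $g \simeq \tilde f$. Concatenating with the isotopies $f \simeq f_0 \simeq g$ produced above yields the desired isotopy $f \simeq \tilde f$ in $\Emb^\alpha(\Gamma, \R^3)$.

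The main obstacle is the initial perturbation: the existence of a $\C^\alpha$ embedding arbitrarily $\C^1$-close to $f$ with nowhere-vanishing curvature is where the dimension hypothesis enters decisively — it fails in the plane, where inflection points are an unavoidable codimension-one phenomenon. The remainder of the proof is short and routine: rescaling controls the magnitude of the curvature so the hypothesis $\kappa < \tilde\kappa$ of Theorem~\ref{TH:main} holds, the theorem delivers the target curvature, and the $\C^1$-openness of $\Emb^\alpha$ turns each $\C^1$-small perturbation into a genuine isotopy through embeddings.
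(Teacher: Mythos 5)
Your proposal is correct and follows essentially the same route as the paper: arrange $0 < \kappa < \tilde\kappa$ by a dilation, apply Theorem~\ref{TH:main}, and use the $\C^1$-openness of $\Emb^\alpha\subset\Imm^\alpha$ to promote the $\C^1$-small approximation to an isotopy through embeddings. The only cosmetic difference is where the scaling is placed: the paper keeps the curve fixed, applies the theorem with amplified target curvature $\lambda\tilde\kappa$, and then dilates the \emph{output} to $\lambda\tilde f$; you dilate the \emph{source} to $g = \lambda f_0$ and apply the theorem with the original target $\tilde\kappa$ directly. Since curvature scales inversely under dilation, these are equivalent.

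The substantive difference is that you explicitly address the case where the given knot $f$ has $\kappa=0$ somewhere, which Theorem~\ref{TH:main} excludes. Since the corollary places no hypothesis on the curvature of $f$, this case really can occur, and the paper's one-line proof (``After an isotopy, we may assume that $f$ has constant speed. Choose $\lambda$\dots'') passes over it without comment. Your preliminary step --- a $\C^1$-small generic perturbation of $f$ to an embedding $f_0$ with $\kappa(f_0)>0$, justified by Thom transversality and the fact that $\kappa=0$ is a codimension-$(n-1)=2$ condition in $J^2(\Gamma,\R^3)$ --- fills this in cleanly. So your argument is not merely equivalent to the paper's but slightly more complete. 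One small point worth tightening in your write-up: ``arranging for $f_0$ to be arclength-parametrized on a suitable copy of $\Gamma$'' should just say that $f_0$ is reparametrized to \emph{constant} speed on $\Gamma$ itself (which is an isotopy), so that $g=\lambda f_0$ is also constant-speed and the unit-speed clause of Theorem~\ref{TH:main} applies after the trivial domain/image rescaling you mention; as stated, altering $\Gamma$ is not available, since the ambient space $\Emb^\alpha(\Gamma,\R^3)$ is fixed.
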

\begin{proof}
After an isotopy, we may assume that $f$ has constant speed. Choose $\lambda$ so large that the curvature of $f$ is smaller than $\lambda\tilde\kappa$. Then, by Theorem \ref{TH:main}, $f$ is $\C^1$-close and therefore isotopic to a unit-speed curve $\tilde f$ with curvature $\lambda \tilde\kappa$. Finally, $\tilde f$ is isotopic to $\lambda\tilde f$ which has curvature $\tilde\kappa$.
\end{proof}

The above corollary generalizes \cite[Cor.~1]{wasem2016}, where $\alpha=2$ and the speed was not constant. See also \cite{ghomi2007-h,ghomi-raffaelli2024,ghomi-raffaelli2024b} where this result had been obtained for constant $\tilde\kappa$. 
The existence of $\C^2$ knots of constant curvature was first established by McAtee~\cite{mcatee2007}. 

As in \cite{ghomi2007-h, ghomi-raffaelli2024, ghomi-raffaelli2024b}, the proof of Theorem~\ref{TH:main} is based on a variant of convex integration~\cite{gromov:PDR, spring1998, eliashberg2002, geiges2003}. It hinges on the fact that when $\lvert f'\rvert =1$,  $\kappa=|T'|$. After reparametrizing $f$ with unit speed, we deform $T$ to a (longer) spherical curve $\tilde T$ with speed $\tilde\kappa$, which we then integrate to obtain $\tilde f$. For $\tilde f$ to be $\C^1$-close to $f$, $\tilde T$ should be $\C^0$-close to $T$. Moreover, for $\tilde f$ to be tangent to $f$ at the prescribed points $p_i\in\Gamma$, $\tilde T$ should have the same integral as $T$ on every interval between $p_i$. These requirements will be met by combining basic convex geometry (Lemma~\ref{LM:Kalman}) with mapping degree theory (Lemma~\ref{LM:top}). The latter is the novel feature of this work, which makes the arguments significantly shorter than those in \cite{ghomi2007-h}.

\section{Preliminaries}

We begin by recording some basic facts from \cite{ghomi2007-h} which are needed here.

\subsection{Unit speed reparametrization}\label{subsec:reparam}
It suffices to establish Theorem \ref{TH:main} for unit-speed curves. To see this, note that for any $f \in\C^1(\Gamma,\R^n)$ and $\phi \in \C^1(\Gamma,\Gamma)$, we have \cite[Lem.~5.1]{ghomi2007-h}
\begin{equation*}
\lvert f\circ\phi \rvert_1 \leq \lvert f \rvert_1(1+ \lvert \phi\rvert_1).
\end{equation*}
Suppose that Theorem~\ref{TH:main} holds for unit-speed curves. Given $f\in\Imm^\alpha(\Gamma,\R^n)$,  
choose $\lambda > 0$ such that $\length(\lambda f)=b-a$. Then there exists a $\C^\alpha$ diffeomorphism $\phi \colon \Gamma \to \Gamma$ such that $\lambda f \circ \phi$ has unit speed. Let $\tilde{\lambda f \circ \phi}$ be the unit-speed curve with curvature $\lambda\,\tilde \kappa\circ\phi$ such that
\begin{equation*}
\bigl\lvert \tilde{\lambda f \circ \phi} - \lambda f \circ \phi \bigr\rvert_1 \leq \epsilon \lambda/\mleft(1+\rvert \phi^{-1}\rvert_1\mright),
\end{equation*}
and $\tilde{\lambda f \circ \phi}$ is tangent to $\lambda f \circ \phi$ at $\phi^{-1}(p_i)$, where $p_i$ are the prescribed points.
Set $\tilde f\coloneqq \tilde{\lambda f \circ \phi}\circ\phi^{-1}/\lambda$. Then $\tilde f$ has curvature $\tilde \kappa$, and is tangent to $f$ at $p_i$. Furthermore, 
\begin{equation*}
\bigl\lvert \tilde f - f \bigr\rvert_1 
= 
\bigl\lvert  \bigl( \tilde{\lambda f \circ \phi} - \lambda f\circ\phi\bigr) \circ\phi^{-1} \bigr\rvert_1/\lambda
\leq 
 \bigl \lvert \tilde{\lambda f \circ \phi} - \lambda f\circ\phi\bigr\rvert_1 \bigl(1+ \lvert \phi^{-1}\rvert_1\bigr)/\lambda 
 \leq
 \epsilon,
\end{equation*}
as desired.

\subsection{Average and center of mass}
Let $I\coloneqq [a,b]$ and $\lvert I\rvert \coloneqq b-a$. For any curve $f\in  \Imm^{\alpha\geq 1}(I,\R^n)$, set $\length(f)\coloneqq \int_I \lvert f'\rvert\,du$. The  \textit{average} and \textit{center of mass} of $f$ are defined as 
\begin{equation*}
\ave(f)\coloneqq \frac{1}{\lvert I \rvert}\int_I f\,dt,
\quad\quad \text{and} \quad\quad 
\cm (f)\coloneqq \frac{1}{\length(f)} \int_I f\lvert f'\rvert\,dt.
\end{equation*}
In particular, $\cm(f)=\ave(f)$ when $f$ has constant speed. Moreover, if $g\colon [c,d]\to\R^n$ is a \emph{reparametrization} of $f$, i.e., there exists a diffeomorphism $\phi \colon [c,d]\to[a,b]$ such that $g = f\circ\phi$, then $\cm(f) = \cm(g)$. 
For any positive (density) function $\rho \in \C^0(I,\R)$, we define the corresponding \emph{mass} and \emph{center of mass} of $f$ as 
\begin{equation*}
\mass(f,\rho)\coloneqq \int_I \rho \lvert f'\rvert \, dt, \quad\quad\text{and}\quad\quad  \cm(f,\rho) \coloneqq \frac{1}{\mass(f,\rho)}\int_I f\rho \lvert f'\rvert\,dt.
\end{equation*}
So $\cm(f,1/\lvert f' \rvert) = \ave(f)$. More generally, letting $\phi\colon[0,\mass(f,\rho)]\to I$ be the  inverse of the (mass) function $t\mapsto \int_a^t \rho \lvert f'\rvert \, du$, we obtain
\begin{equation}\label{eq:cm-ave2}
\cm(f,\rho) = \ave(f\circ\phi),
\end{equation}
see \cite[Lem.~2.2]{ghomi2007-h}.

\section{Reduction to Spherical Curves}
Here we show that Theorem~\ref{TH:main} follows from a geometric result (Proposition \ref{PROP:spherical}) for spherical curves.
First we reduce Theorem~\ref{TH:main} to the following local problem.

\begin{proposition}\label{PROP:local}
Let $f\in\Imm^{\alpha\geq 2}(I,\R^n)$ be a unit-speed curve with curvature $\kappa>0$, and $V$ be an open neighborhood of $T(I)$ in $\S^{n-1}$. Then for any $\C^{\alpha-2}$ function $\tilde\kappa\colon I\to\R$ with $\tilde\kappa>\kappa$, there exists a unit-speed curve $\tilde f\in\Imm^\alpha(I,\R^n)$ with curvature $\tilde\kappa$ such that $\tilde T(I)\subset V$, $f = \tilde f$ on $\partial I$, and $T(U) =\tilde T(\tilde U)$ for some open neighborhoods $U$, $\tilde U$ of $\partial I$ in $I$.
\end{proposition}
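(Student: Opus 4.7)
Since $f$ has unit speed, the identity $T=f'$ gives $|T'|=\kappa$. Defining $\tilde f(t)\coloneqq f(a)+\int_a^t \tilde T\,du$ reduces the proposition to constructing a $\C^{\alpha-1}$ spherical curve $\tilde T\colon I\to\S^{n-1}$ with $|\tilde T'|=\tilde\kappa$, image in $V$, $\int_a^b \tilde T\,du=\int_a^b T\,du$ (which enforces $\tilde f(b)=f(b)$), and $T(U)=\tilde T(\tilde U)$ for some neighborhoods $U,\tilde U$ of $\partial I$.

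The first step is to arrange the image-matching condition near $\partial I$ via a time-change of $T$. Since $\kappa>0$, $T$ is an immersion. Fix a small $\delta>0$ and pick $\delta'\in(0,\delta)$ so that $\int_a^{a+\delta'}\tilde\kappa\,du=\int_a^{a+\delta}\kappa\,du$; let $\tilde T$ on $[a,a+\delta']$ be the unique speed-$\tilde\kappa$ reparametrization of $T|_{[a,a+\delta]}$ starting at $T(a)$, and do the analogous construction near $b$. Then $\tilde T$ and $T$ have identical spherical images on the buffer intervals, and by the chain rule they share boundary jets at $a$ and $b$.

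Next, on the remaining interval $J\coloneqq[a+\delta',b-\delta']$, partition $J$ into small sub-intervals $I_j=[t_j,t_{j+1}]$ fine enough that each $T(I_j)$ lies in a small spherical cap $V_j\subset V$ and the length excess $\int_{I_j}(\tilde\kappa-\kappa)\,du$ leaves ample room for a wiggle in $V_j$. On each $I_j$, invoke Proposition \ref{PROP:spherical} to produce $\tilde T|_{I_j}$ with speed $\tilde\kappa$, image in $V_j$, prescribed $\C^{\alpha-1}$ jets at $t_j$ and $t_{j+1}$ (chosen so adjacent pieces glue smoothly and the end pieces match the buffers), and $\int_{I_j}\tilde T\,du$ equal to a target vector $A_j$. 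Choose the $A_j$ so that $\sum_j A_j$, plus the two buffer contributions, equals $\int_a^b T\,du$; the required correction to the ``natural'' choice $A_j=\int_{I_j}T\,du$ is small (of order $\delta$) and can be absorbed piece by piece since each $A_j$ varies in an open set.

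The main obstacle is precisely the local construction delivered by Proposition \ref{PROP:spherical}: producing a spherical curve of prescribed speed, vector-valued average, endpoint jets, and image in $V_j$ simultaneously. This is where the convex-geometric Lemma \ref{LM:Kalman} combines with the degree-theoretic Lemma \ref{LM:top}; the essential slack is provided by $\tilde\kappa>\kappa$, giving strictly positive excess arclength, together with the smallness of $V_j$, which places the target average in the relative interior of $\conv V_j$.
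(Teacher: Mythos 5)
Your opening reduction is right and matches the paper: set $\tilde f(t)=f(a)+\int_a^t\tilde T\,du$, so that the problem becomes building a spherical curve $\tilde T$ of speed $\tilde\kappa$ in $V$ whose image matches $T$'s near $\partial I$ and whose integral over $I$ equals $\int_I T$. But the construction you then propose does not go through, for two reasons tied to how Proposition~\ref{PROP:spherical} is actually stated.

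First, Proposition~\ref{PROP:spherical} does \emph{not} allow you to prescribe an arbitrary target average $A_j$, nor arbitrary $\C^{\alpha-1}$ jets at both endpoints. It guarantees only that $\ave(\tilde T)=\ave(T)$ — the average is forced to equal that of the input curve — and that $T(U)=\tilde T(\tilde U)$, i.e.\ the \emph{image} of $\tilde T$ coincides with that of $T$ near $\partial I$ (up to reparametrization), not a prescribed jet. So your plan to correct the boundary-buffer offsets by choosing $A_j\neq\int_{I_j}T\,du$ cannot be executed via Proposition~\ref{PROP:spherical}, and your gluing step implicitly asks for endpoint data the proposition doesn't deliver. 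Second, Proposition~\ref{PROP:spherical} requires the input $T$ to be \emph{nonflat}, which you never address. On the tiny pieces $I_j$ you introduce, a short arc of $T$ will generically be flat (its convex hull has empty interior in $\R^n$), so the hypothesis will actually fail on most of your subintervals, not just need checking.

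Both issues vanish if you drop the partition-and-glue architecture entirely. The paper applies Proposition~\ref{PROP:spherical} \emph{once} to the whole tantrix $T$ on $I$: after a small $\C^\alpha$ perturbation of $f$ on a compact set in the interior of $I$ (possible because $\kappa>0$ ensures $f$ is not a line segment, and this perturbation can preserve unit speed and keep $\tilde\kappa>\kappa$), $T$ becomes nonflat, and a single application of Proposition~\ref{PROP:spherical} with $\tilde v=\tilde\kappa$ hands you exactly what the reduction needs: $\lvert\tilde T'\rvert=\tilde\kappa$, $\tilde T(I)\subset V$, the image match near $\partial I$, and $\ave(\tilde T)=\ave(T)$ — which is precisely the condition $\int_I\tilde T=\int_I T$ that forces $\tilde f(b)=f(b)$. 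The buffer intervals, the subintervals $I_j$, the caps $V_j$, and the target vectors $A_j$ are all unnecessary.
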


To see that the above proposition implies Theorem~\ref{TH:main} recall that,  as discussed in Section \ref{subsec:reparam}, we may assume that $f$ in Theorem \ref{TH:main} has unit speed. So $T=f'$. Let $I_i$ be a partition of $\Gamma$ into intervals such that $\partial I_i$ include all the prescribed points, and set $f_i\coloneqq f\rvert_{I_i}$. Choose $I_i$ so small that  $\lvert I_i\rvert \leq 1$ and $T(I_i)$ lies in the interior $V_i$ of a ball of radius $\epsilon/2$ in $\S^{n-1}$. Applying Proposition~\ref{PROP:local} to $f_i$, we obtain a $\C^\alpha$ curve $\tilde f_i$ with unit speed and curvature $\tilde\kappa_i\coloneqq \tilde\kappa\rvert_{I_i}$ such that $\tilde T_i(I_i)\subset V_i$, $f_i = \tilde f_i$ on $\partial I_i$, and $T_i(U_i) =\tilde T_i(\tilde U_i)$ for some open neighborhoods $U_i$, $\tilde U_i$ of $\partial I_i$ in $I_i$. 

Define $\tilde f$ by $\tilde f\rvert_{I_i}\coloneqq \tilde f_i$. Then $\tilde f$ is $\C^0$, because $f_i = \tilde f_i$ on $\partial I_i$. Moreover $\tilde f$ is $\C^1$, because $\tilde T_i= \tilde f_i'$ and $\tilde T_i=T_i$ on $\partial I_i$. Hence $\tilde T \coloneqq \tilde f'$ is well-defined.  Note that $\tilde T$ is piecewise $\C^{\alpha-1}$. Furthermore, since $T_i(U_i)=\tilde T_i(\tilde U_i)$, there are open neighborhoods $W_i$ and $\tilde W_i$ of $\partial I_i$ in $\Gamma$ such that $\tilde T(\tilde W_i) = T(W_i)$. Hence $\tilde T$ is a reparametrization of $T$ with speed $\tilde \kappa$ near $\partial I_i$, and so is $\C^{\alpha-1}$. Consequently $\tilde f$ is $\C^\alpha$ and has curvature $\tilde \kappa$. Next note that for  $t\in I_i$, $\tilde f(t) = f(t_i) + \int_{t_i}^t \tilde T_i \,du$ and $f(t) = f(t_i) + \int_{t_i}^t  T_i \,du$, where $t_i$ is the initial point of $I_i$.
Furthermore, since $\tilde T(I_i) \subset V_i$, we have $\lvert \tilde T - T\rvert_0 \leq \epsilon$. Thus,
\begin{equation*}
\bigl\lvert \tilde f(t) - f(t) \bigr\rvert 
= 
\left\lvert \int_{t_i}^t (\tilde T_i - T_i)  \,du\right\rvert 
\leq
 \int_{t_i}^t \left\lvert\tilde T_i - T_i\right\rvert du
\leq
 \epsilon \lvert I_i\rvert 
 \leq 
 \epsilon.
\end{equation*}
So Proposition \ref{PROP:local} does indeed imply Theorem \ref{TH:main}.  Next we show that Proposition~\ref{PROP:local}  is a consequence of the following result for spherical curves. We say that a curve $f\in\Imm^\alpha(I,\R^n)$ is \emph{nonflat} if the convex hull  of $f(I)$ has interior points.

\begin{proposition}\label{PROP:spherical}
Let $T\in\Imm^{\alpha\geq 1}(I,\S^{n-1})$ be a nonflat curve. Then for any $\C^{\alpha-1}$ function $\tilde v\colon I\to\R$ with $\tilde v>\lvert T' \rvert$ and open neighborhood $V$ of $T(I)$, there exists a curve $\tilde T\in\Imm^\alpha(I,\S^{n-1})$ such that 
$\lvert \tilde T'\rvert=\tilde v$,
$\tilde T(I)\subset V$,
$T(U) =\tilde T(\tilde U)$ for some open neighborhoods $U$, $\tilde U$ of $\partial I$, and
$\ave(T)=\ave(\tilde T)$.
\end{proposition}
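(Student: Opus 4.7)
The plan is to build $\tilde T$ by combining a fast reparametrization of $T$ (handling the boundary condition and speed constraint away from wiggles) with a finite collection of small spherical ``wiggle loops'' inserted to absorb the excess arclength; the amplitudes of the wiggles are then adjusted via mapping degree theory so that $\ave(\tilde T)=\ave(T)$.

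First, I would localize at the boundary: in open neighborhoods of $\partial I$, set $\tilde T$ equal to the speed-$\tilde v$ reparametrization of $T$, which instantly secures the condition $T(U)=\tilde T(\tilde U)$ and keeps $\tilde T$ inside $V$ there. On the complementary interior subinterval $J\subset I$, I must construct a spherical curve of speed $\tilde v$ whose jets at $\partial J$ match those of the boundary pieces and whose integral over $J$ equals $\int_J T\,dt-\bigl(\int_{I\setminus J}\tilde T\,dt-\int_{I\setminus J}T\,dt\bigr)$; summed with the boundary contributions, this is exactly the integral needed to force $\ave(\tilde T)=\ave(T)$.

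Next, I would partition $J$ into many small subintervals $J_k$ with midpoints $t_k^*$. On each $J_k$, distort the basic speed-$\tilde v$ curve by inserting a smoothly glued spherical loop near $T(t_k^*)$ in a chosen tangent direction $u_k\in T_{T(t_k^*)}\S^{n-1}$ with amplitude $h_k\in\R$. Shaping each loop carefully maintains speed $\tilde v$ throughout; small amplitudes together with a fine partition keep the image inside $V$. This produces a family $\tilde T_h$ of admissible candidates indexed by $h=(h_k)\in[-h_{\max},h_{\max}]^N$. A Taylor expansion at $h=0$ of the integral map $\Phi(h)\coloneqq\ave(\tilde T_h)$ shows that $\partial\Phi/\partial h_k$ at the origin is proportional to $u_k$.

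The nonflatness of $T$ on $I$ enters through Lemma~\ref{LM:Kalman}: it supplies $n$ locations $t_{k_1}^*,\dots,t_{k_n}^*$ whose tangent directions $u_{k_1},\dots,u_{k_n}$ form a basis of $\R^n$. Fixing all other amplitudes at zero reduces $\Phi$ to a continuous map $\R^n\to\R^n$ whose differential at the origin is invertible, hence of nonzero local degree; Lemma~\ref{LM:top} then guarantees that its image covers a neighborhood of $\Phi(0)$. A fine enough partition makes $\Phi(0)$ close to $\ave(T)$, placing the target inside this neighborhood, and the desired $h$ exists. The main obstacle is the balancing act between partition size, loop amplitude, and the neighborhood $V$: one needs enough wiggle freedom to solve the $n$-dimensional integral constraint yet small enough loops to stay inside $V$, and the two scales must be chosen in the right order. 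Lemma~\ref{LM:Kalman} is the pivotal ingredient that reconciles them, exploiting the global nonflatness of $T$ to guarantee $n$ independent wiggle directions regardless of how fine the partition must ultimately be.
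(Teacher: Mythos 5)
Your high-level strategy---absorb the excess arclength with small spherical loops, then use degree theory to enforce $\ave(\tilde T)=\ave(T)$, with nonflatness providing the freedom to solve the $n$-dimensional constraint---is in the right spirit, but two concrete steps do not hold up, and the way you combine them diverges substantially from what the paper actually does.

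First, you misread Lemma~\ref{LM:Kalman}. It does not ``supply $n$ locations whose tangent directions form a basis of $\R^n$.'' It constructs smooth barycentric coordinates $c_i$ for a fixed finite point set $\{p_i\}$ whose convex hull has interior, so that every $x$ near the centroid can be written as $\sum c_i(x)p_i$ with $c_i(x)>0$ depending smoothly on $x$. In the paper this is applied to the points $p_i=\ave(\theta_i T)$ (weighted averages of $T$ over a partition of unity), and the $c_i$ are used to build a continuous family of positive densities $\ol\rho_x$; integrating these densities gives reparametrizations $\phi_x$ of $I$ with $\ave(T\circ\phi_x)\approx x$. There is no claim, and no need, about spanning by tangent vectors $u_k\in T_{T(t_k^*)}\S^{n-1}$; that is a different statement which you would have to prove separately, and it is not what nonflatness directly gives you.

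Second, your loop construction has an internal tension you do not resolve. If $\tilde T$ is required to have speed $\tilde v$ on all of $J_k$, then the total arclength of $\tilde T|_{J_k}$ is $\int_{J_k}\tilde v$, which is fixed; so the length of the inserted loop is forced once you decide how much of $T(J_k)$ to trace. This leaves no genuinely free amplitude $h_k\in\R$ unless you simultaneously vary the loop's length and the portion of $T$ it replaces, and you also need $h_k=0$ to be admissible (which it is not, since a loop of zero length cannot absorb the required excess). You never disentangle the two roles your loops are supposed to play---absorbing length and steering the average---and the Taylor-expansion claim $\partial\Phi/\partial h_k\propto u_k$ is asserted, not derived, for loops of constrained length. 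The paper avoids this entirely by \emph{decoupling}: in Part~I the reparametrizations $\ol T_x=T\circ\phi_x$ steer the average continuously over a ball $B$ (using Lemma~\ref{LM:Kalman}); in Part~II, loops of \emph{prescribed} length $\ell_i^x$ (forced by $\tilde v$) are inserted at midpoints $q_i^x$, and one only needs the crude estimate $\lvert\ave(\tilde T_x)-\ave(\ol T_x)\rvert<R/2$ because the loops stay in balls of radius $<R/4$. Lemma~\ref{LM:top} is then applied to the $n$-dimensional parameter $x\in B$, verifying the hypothesis $\lvert F(x)-x\rvert<R$ directly; your application to the amplitude vector $h$ would require an extra affine conjugation by $D\Phi(0)^{-1}$ that you gesture at but do not set up.

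In short: the idea of ``loops plus degree theory'' is right, but the precise degree-theoretic parameter must be chosen so that the constraint $\lvert\tilde T'\rvert=\tilde v$ is automatically satisfied for every value of the parameter. The paper achieves this by parametrizing over reparametrizations of $T$ rather than over loop shapes, and Lemma~\ref{LM:Kalman} is the tool for producing that parametrized family of reparametrizations---not for finding independent tangent directions.
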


To see that Proposition~\ref{PROP:spherical} implies Proposition~\ref{PROP:local}, let $T$ be the tantrix of $f$ in Proposition~\ref{PROP:local}. After a small $\C^\alpha$ perturbation of $f$ on a compact set in the interior of $I$, we may assume that $T$ is nonflat. This is possible since by assumption $\kappa>0$. Hence $f$ does not trace a line segment, and so it can be perturbed without changing its length, which ensures that it remains a unit-speed curve. Furthermore, since the perturbation is $\C^2$-close to the original curve, we will still have $\tilde \kappa>\kappa$. Now let $\widetilde T$ be the $\C^{\alpha-1}$ curve obtained by applying Proposition~\ref{PROP:spherical} to $T$ with $\tilde v\coloneqq \tilde\kappa$. Then $\tilde f(t)\coloneqq f(a) + \int_a^t \tilde T\,du$ is a $\C^\alpha$ curve with $\tilde f' =\tilde T$. So $\tilde f$ has unit speed and curvature $\tilde \kappa$. Finally, the assumption that $\ave(T)=\ave(\tilde T)$ ensures that $f = \tilde f$ on $\partial I$, which completes the argument.

\section{Proof of Theorem~\ref{TH:main}}
It remains to establish Proposition~\ref{PROP:spherical}, which completes the proof of Theorem~\ref{TH:main} as discussed above.
Our proof is based on the following topological lemma, which is established quickly via basic degree theory.

\begin{lemma}[\cite{ghomi-raffaelli2024b}]\label{LM:top}
Let $B\subset \R^n$ be a ball of radius $R$ centered at $x_0$, and $F\colon B\to\R^n$ be a continuous map. If $\lvert F(x) - x\rvert < R$ for all $x\in\partial B$, then $x_0\in F(B)$.
\end{lemma}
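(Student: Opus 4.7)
The plan is to invoke Brouwer degree theory: I will show that the degree of $F$ on $B$ with respect to the point $x_0$ equals $1$, which forces $x_0$ to lie in the image $F(B)$.

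First I would introduce the straight-line homotopy $H_t\colon B\to\R^n$ defined by $H_t(x)\coloneqq (1-t)x+tF(x)$, so that $H_0=\id$ and $H_1=F$. The crucial step is to verify that $x_0\notin H_t(\partial B)$ for every $t\in[0,1]$. For $x\in\partial B$ one has
\[
H_t(x)-x_0 \;=\; (x-x_0)+t\bigl(F(x)-x\bigr),
\]
and the reverse triangle inequality yields
\[
\lvert H_t(x)-x_0\rvert \;\geq\; \lvert x-x_0\rvert - t\lvert F(x)-x\rvert \;>\; R-tR \;\geq\; 0,
\]
where the strict middle inequality uses the hypothesis $\lvert F(x)-x\rvert<R$ together with $t\leq 1$.

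Once this is in hand, homotopy invariance of the degree gives $\deg(F,B,x_0)=\deg(\id,B,x_0)=1$, the latter because $x_0$ is the center of $B$, hence an interior point. Nonvanishing of the degree produces a preimage, so $x_0\in F(B)$, as desired.

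The only delicate point is the boundary estimate above; in particular the strictness in the hypothesis $\lvert F(x)-x\rvert<R$ is precisely what is needed to cover the $t=1$ endpoint of the homotopy, where the estimate degenerates to $R-\lvert F(x)-x\rvert>0$. Aside from this, the argument is a textbook application of degree theory, and I do not expect any real obstacle. An alternative route would be to apply the Brouwer fixed-point theorem to $x\mapsto x-F(x)+x_0$ on a suitable retract of $B$, but the degree-theoretic version is cleaner and requires no modification of the domain.
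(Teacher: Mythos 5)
Your degree-theoretic argument is exactly the approach the paper indicates (it cites the lemma and remarks that it is ``established quickly via basic degree theory''), and the proof is correct in substance. One small slip in the writeup: the displayed chain $\lvert H_t(x)-x_0\rvert \geq R - t\lvert F(x)-x\rvert > R - tR \geq 0$ is not valid at $t=0$, where the middle ``strict'' step becomes an equality; the clean fix is to use $t\leq 1$ in the other order, namely $R - t\lvert F(x)-x\rvert \geq R - \lvert F(x)-x\rvert > 0$, which handles all $t\in[0,1]$ at once and is in fact what your verbal explanation already suggests.
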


We also need the following version of a classical result of  Kalman~\cite{kalman1961}, who constructed continuous barycentric coordinates in convex polytopes. 

\begin{lemma}\label{LM:Kalman}
Let $p_1,\dots,p_k\in\R^n$ be a collection of points whose convex hull has nonempty interior, and $B$ be a ball centered at $x_0\coloneqq \sum_{i=1}^k p_i/k$.  If $B$ is sufficiently small, then there are positive $\C^\infty$ functions $c_i \colon B \to \R$ such that 
$$
\sum_{i=1}^k c_i(x) =1,\quad\quad\sum_{i=1}^k c_i(x)p_i = x,\quad\quad c_{i}(x_0) = \frac{1}{k}.
$$
\end{lemma}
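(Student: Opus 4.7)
The plan is to treat the three required identities as affine-linear conditions on the vector $c(x) = (c_1(x), \ldots, c_k(x)) \in \R^k$ and to solve them by a single formula that is affine (hence $\C^\infty$) in $x$; positivity will then follow automatically near $x_0$ by continuity, since the middle identity is strict. Concretely, consider the linear map $L\colon \R^k \to \R^{n+1}$ given by $L(c) = \bigl(\sum_i c_i,\; \sum_i c_i p_i\bigr)$, so that the conditions $\sum c_i = 1$ and $\sum c_i p_i = x$ collapse into the single equation $L(c(x)) = (1, x)$.

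The key observation is that the hypothesis that $\conv\{p_1, \dots, p_k\}$ has nonempty interior is equivalent to the affine hull of the $p_i$ being all of $\R^n$, which is in turn equivalent to the $k$ vectors $(1, p_i) \in \R^{n+1}$ spanning $\R^{n+1}$. Since $(1, p_i) = L(e_i)$ for the standard basis $\{e_i\}$ of $\R^k$, this is precisely surjectivity of $L$. Hence $L$ admits a linear right inverse $R\colon \R^{n+1}\to\R^k$ with $L\circ R = \id$.

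I would then define
\begin{equation*}
c(x) \coloneqq \bigl(1/k,\ldots,1/k\bigr) + R\bigl(0,\, x - x_0\bigr),
\end{equation*}
which is affine and hence $\C^\infty$ in $x$. Applying $L$ and using $\sum_i p_i / k = x_0$ gives $L(c(x)) = (1, x_0) + (0, x - x_0) = (1, x)$, which yields the first two identities, while $c(x_0) = (1/k,\ldots,1/k)$ yields the third. Since each $c_i$ is continuous with $c_i(x_0) = 1/k > 0$, a sufficiently small ball $B$ about $x_0$ makes every $c_i$ positive on $B$, completing the construction.

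Because the construction is essentially a single application of linear algebra followed by a continuity argument, I do not anticipate any serious obstacle. The only point requiring a moment's reflection is translating the convex-geometric hypothesis into surjectivity of $L$; once that is done, the positivity requirement—the only nonlinear aspect of the statement—is handled automatically by the strict positivity of the constant solution at $x_0$ and the freedom to shrink $B$.
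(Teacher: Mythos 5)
Your proof is correct and rests on the same core idea as the paper's: treat the two affine constraints on $c$ as a linear system, produce a solution that is affine (hence $\C^\infty$) in $x$ and equals the uniform vector $(1/k,\dots,1/k)$ at $x_0$, and then shrink $B$ so that positivity follows by continuity. The packaging differs. The paper first translates so that $x_0=0$, picks $n$ linearly independent points among the $p_i$, solves $x=\sum_{i\leq n} a_i(x)p_i$, and corrects by redistributing $1-\sum_i a_i(x)$ uniformly to force $\sum c_i=1$; you instead encode both constraints at once in the augmented map $L(c)=\bigl(\sum_i c_i,\ \sum_i c_ip_i\bigr)$ and apply a linear right inverse, whose existence you correctly identify with the nonempty-interior hypothesis (affine span $=\R^n$ iff the vectors $(1,p_i)$ span $\R^{n+1}$). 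Your version is a touch cleaner on one point: the paper's assertion that some $n$ of the $p_i$ are linearly independent silently relies on having first moved the centroid $x_0$ to the origin (so that the affine hull becomes a linear subspace), whereas your surjectivity formulation handles this without any normalization. In exchange, the paper's formula $c_i(x)=\tfrac{1}{k}\bigl(1-\sum_j a_j(x)\bigr)+a_i(x)$ is a fully explicit right inverse, while you invoke one abstractly; either is fine.
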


\begin{proof}
We may assume that $x_0=0$ after a translation. Then there are $n$ linearly independent vectors among $p_i$, say $p_1,\dotsc, p_n$. So there are unique coefficients $a_1(x),\dotsc, a_n(x)$ such that $x = \sum_{i=1}^n a_i(x)p_i$ for each $x\in B$. The functions $a_i\colon B\to\R$ are $\C^\infty$, as they are linear.
Set $a_i =0$ for $i>n$, and let
\begin{equation*}
c_i(x)\coloneqq \frac{1}{k} \left(1-\sum_{i=1}^k a_i(x)\right)+ a_i(x).
\end{equation*}
Since $\sum_{i=1}^n a_i(x_0)p_i=\sum_{i=1}^k a_i(x_0)p_i=x_0$ and $p_1,\dotsc, p_n$ are linearly independent, $a_i(x_0)=0$. Thus $c_i(x_0)=1/k$. So $c_i>0$ on $B$ if $B$ is small. Furthermore, 
\begin{gather*}
\sum_{i=1}^k c_i(x) = 1-\sum_{i=1}^k a_i(x)+\sum_{i=1}^k a_i(x)=1,\\
\sum_{i=1}^k c_i(x)p_i = x_0 \biggl(1-\sum_{i=1}^k a_i(x) \biggr)+\sum_{i=1}^k a_i(x)p_i=x,
\end{gather*}
as desired.
\end{proof}

Now we are ready to prove the key result of this work:

\begin{proof}[Proof of Proposition~\textup{\ref{PROP:spherical}}]
Let $\psi\colon [0,1]\to I$ be given by $\psi(t)\coloneqq (b-a)t+a$. Then $\ave(T\circ\psi)=\ave(T)$. So, after replacing $T$ with $T\circ\psi$ and $v$ with $v\circ\psi$, we may assume that $I=[0,1]$.
Then
\begin{equation*}
x_0 \coloneqq \ave(T)=\int_I T.
\end{equation*}
 Since $T$ is nonflat, there exists a  ball $B$ of radius $R>0$ in the convex hull of $T(I)$ centered at $x_0$.
By Lemma~\ref{LM:top} it is enough to construct a continuous mapping 
\begin{equation*}
B \ni x \mapsto \tilde T_x \in \Imm^\alpha(I,\S^{n-1})
\end{equation*}
such that  $\tilde T_x$ satisfies the first three required properties in the statement of the proposition, and $\lvert \ave(\tilde T_x) - x\rvert < R$. 
To this end we first construct a continuous family of reparametrizations $\ol T_x \in \Imm^\alpha(I,\S^{n-1})$ of $T$ such that 
\begin{equation}\label{eq:olT}
\left\lvert \ave(\ol T_x) - x\right\rvert < R/2, \qquad\text{and}\qquad \ol T_{x_0} = T.
\end{equation}
 Then we construct $\tilde T_x$ by adding certain loops to $\ol T_x$ so that
$
\lvert \ave(\tilde T_x) - \ave(\ol T_x) \rvert < R/2,
$
which will complete the proof.

(\emph{Part I}) We need to find a continuous family of $\C^\alpha$ diffeomorphism $\phi_x\colon I\to I$ such that $\ol T_x:=T\circ\phi_x$ satisfies the conditions listed in \eqref{eq:olT}. To this end it suffices to construct a continuous family of positive $\C^{\alpha-1}$ functions $\rho_x\colon I\to\R$ such that 
\begin{equation}\label{eq:rho}
\lvert \cm(T,\rho_x)-x \rvert< R/2, \qquad \rho_{x_0}= 1/\lvert T'\rvert, \qquad \text{and} \qquad \int_I\rho_x|T'|dt=1.
\end{equation}
Then the inverse of the function $[0,1]\ni t\mapsto \int_0^t \rho_x \lvert T'\rvert \, du$ gives the desired $\phi_x$.
Indeed the last condition in \eqref{eq:rho} ensures that $\phi_x\colon I\to I$, and since $\rho_x$ is $\C^{\alpha-1}$ and positive, $\phi_x$ is a $\C^\alpha$ diffeomorphism by the inverse function theorem. Furthermore, the first condition in \eqref{eq:rho} yields the first condition in \eqref{eq:olT} by \eqref{eq:cm-ave2}. Finally,
the second condition in \eqref{eq:rho} guarantees that $\phi_{x_0}$ is the identity, which yields the second condition in \eqref{eq:olT}. 

To construct $\rho_x$ we rewrite \eqref{eq:rho} as 
\begin{equation}\label{eq:olrho}
\left\lvert  \int_I\ol\rho_xT\,dt - x\right\rvert < R/2,\quad\quad  \ol\rho_{x_0}= 1, \quad\quad\text{and}\quad\quad \int_I\ol\rho_{x}=1,
\end{equation}
where $\ol\rho_x:=\lvert T'\rvert \rho_x$. Hence it suffices to construct a continuous family of positive $\C^{\alpha-1}$ functions $\ol\rho_x\colon I\to \R$ satisfying \eqref{eq:olrho}. To this end let $I_i$ be a partition of $I$ into $k>n$ equal segments, and $\theta_i$ be a $\C^\infty$ partition of unity subordinate to $I_i$.  Set 
$$
p_i\coloneqq \ave(\theta_i T)=k \int_I \theta_i T\,dt.
$$
 Then
\begin{equation*}
x_0 =  \sum_i \int_I \theta_i T \, dt = \frac{1}{k} \sum_i p_i.
\end{equation*}
Assuming $k$ is sufficiently large, the convex hull of $p_i$, $\conv(\{p_i\})$, has interior points, since $T$ is nonflat. In particular $x_0$ lies in the interior of $\conv(\{p_i\})$. Fix $k$ and choose $R$ so small that $B$ lies in $\conv(\{p_i\})$.
Then, by Lemma~\ref{LM:Kalman}, there exist positive $\C^\infty$ functions $c_i \colon B\to \R$ with $\sum c_i =1$ such that $\sum c_i(x)p_i = x$ and $c_i(x_0) = 1/k$.
Set
\begin{equation*}
\ol \rho_x \coloneqq \lambda(x) \sum_i c_i(x)\theta_i, \quad \text{where}\quad \lambda(x)\coloneqq 1\big/\int_I \sum_i c_i(x)\theta_i\, dt.
\end{equation*}
Then $\ol\rho_{x_0}=1$, and $\int_I\ol\rho_{x}=1$. Furthermore, 
\begin{equation*}
\int_I\ol\rho_xT\, dt =\lambda(x) \sum_i c_i(x)\int_I \theta_i T\,dt =\frac{\lambda(x)}{k}\sum_i c_i(x)p_i=\frac{\lambda(x)}{k} x.
\end{equation*}
Since $\lambda(x_0)=k$, choosing $R$ sufficiently small we can make sure that $\lvert \lambda(x)/k - 1\rvert <1/2$. Then 
\begin{equation*}
\left\lvert \int_I\ol\rho_xT\,dt-x\right\rvert 
= 
\left\lvert \frac{\lambda(x)}{k}-1\right\rvert \lvert x\rvert 
< 
 \frac{R}{2},
\end{equation*}
as desired.

(\emph{Part II})
Partition $I$ into subintervals $I_i$ such that for all $x\in B$, $\ol T_x$ is injective on $I_i$ and $\length(\ol T_x|_{I_i})<R/2$. So $\ol T_x(I_i)$ lies in a ball $U_i^x$ of radius less than $R/4$ centered at the midpoint $q_i^x$ of $\ol T_x(I_i)$. Let $C_i^x\subset (V\cap U_i^x)$ be a family of $\C^\alpha$ loops, depending continuously on $q_i^x$, that coincide with $\ol T_x(I_i)$ near $q_i^x$.  For instance, we may construct $C_i^x$ by gluing a segment of $\ol T_x(I_i)$ to an arc of a circle centered at $q_i^x$ and rounding off the corners. Similarly we may construct a continuous family of $\C^\alpha$ loops $C_{i,\ell}^x$ nested inside $C_i^x$ with length $0<\ell\leq\length(C_i^x)$  that contain a neighborhood of $q_i^x$ in $\ol T_x(I_i)$; see Figure~\ref{fig:loop}. 
\begin{figure}[h]
\begin{overpic}[height=1.25in]{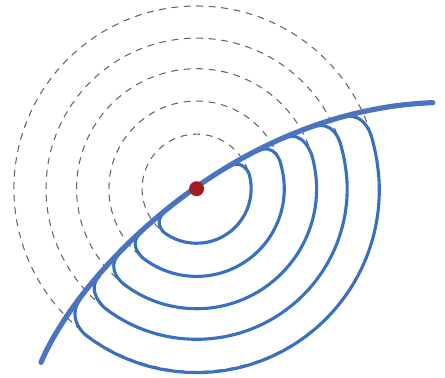}
\put(-18,-2){\small $\ol T_x(I_i)$}
\put(35,45){\small $q_i^x$}
\put(81,15){\small $C_{i,\ell}^x$}
\end{overpic}
\caption{}\label{fig:loop}
\end{figure}
Now for any $L>0$ we can define a unique composite loop of length $L$ at $q_i^x$ as follows. Let $m$ be the largest integer satisfying $m \length(C_i^x)\leq L$. Then go $m$ laps around $C_i^x$, in the direction induced by $\ol T_x$, and one lap around $C_{i,\ell}^x$ for $\ell= L-m\length(C_i^x)$, if $\ell>0$. 
Since $\tilde v>|T'|=|\ol T'_{x_0}|$, we may choose $R$ so small that $\tilde v>|\ol T_x'|$ for all $x\in B$. Then
$$
\ell_i^x\coloneqq \int_{I_i} \tilde v\,dt - \length \mleft(\ol T_x\rvert_{I_i}\mright)>0.
$$
Finally, let $\tilde T_x$ be the curve of speed $\tilde v$ tracing $T$ plus  loops of length $\ell_i^x$ at $q_i^x$. Then  
$\ol T_x(I_i)\subset \tilde T_x(I_i)\subset U_i^x$. Thus
$|\tilde T_x-\ol T_x|_0< R/2$, which yields that $\lvert \ave(\tilde T_x) - \ave(\ol T_x)\rvert < R/2$ as desired.
\end{proof}

\section*{Acknowledgment}
We thank Andrzej Świ\k{e}ch for suggesting the proof of Lemma~\ref{LM:Kalman}, and Micha Wasem for comments on an earlier draft of this work.

\bibliographystyle{amsplain}
\bibliography{references}
\end{document}